\let\normalrender\PdfRender@NormalColorHook
\let\PdfRender@NormalColorHook\@empty
\g@addto@macro\normalsize{%
  \setlength\abovedisplayskip{7pt}
  \setlength\belowdisplayskip{7pt}
  \setlength\abovedisplayshortskip{7pt}
  \setlength\belowdisplayshortskip{7pt}
}
\setlist{nolistsep} 	
\definecolor{Color1}{rgb}{0.0, 0.2, 0.5}
\definecolor{Color2}{rgb}{0.78, 0.11, 0.0}
\titlespacing*{\section}{0pt}{3.5ex plus 0ex minus 0ex}{1.5ex plus 0ex}
\titlespacing*{\subsection}{0pt}{3.5ex plus 0ex minus 0ex}{1.5ex plus 0ex}
\titlespacing*{\subsubsection}{0pt}{3.5ex plus 0ex minus 0ex}{1.5ex plus 0ex}
\newtheoremstyle{plain}{3mm}{3mm}{\slshape}{}{\bfseries}{.}{.5em}{}
\newtheoremstyle{definition}{2mm}{2mm}{}{}{\bfseries}{.}{.5em}{}
\theoremstyle{plain}
\newtheorem{Theorem}{Theorem}
\newtheorem{Lemma}[Theorem]{Lemma}
\newtheorem{Proposition}[Theorem]{Proposition}
\theoremstyle{definition}
\theoremstyle{plain}
\newtheorem*{namedthm}{\namedthmname}
\newcounter{namedthm}
\numberwithin{equation}{section}
\newcommand{\Cesaro}{Ces\`{a}ro}
\newcommand{\Erdos}{Erd\H{o}s}
\newcommand{\Gauss}{Gau{\ss}}
\newcommand{\Turan}{Tur{\'a}n}
\newcommand{\Oh}{{\rm O}}
\newcommand{\oh}{{\rm o}}
\newcommand{\N}{\mathbb{N}}
\newcommand{\R}{\mathbb{R}}
\newcommand{\C}{\mathbb{C}}
\newcommand{\Q}{\mathbb{Q}}
\newcommand{\etan}{\eta}
\newcommand{\define}[1]{{\itshape #1}}
\renewcommand{\epsilon}{\varepsilon}
\renewcommand{\leq}{\leqslant}
\renewcommand{\geq}{\geqslant}
\renewcommand{\setminus}{\backslash}
\renewcommand{\P}{\mathbb{P}}
\newcommand{\1}{1}
\newcommand{\lio}{\lambda}
\newcommand{\BEu}[1]{\underset{#1}{\mathlarger{\mathlarger{\mathbb{E}}}^{~}}\,}
\newcommand{\BEul}[1]{\underset{#1}{\mathlarger{\mathlarger{\mathbb{E}}}^{\text{\normalfont\footnotesize log}}}\,}
\newcommand{\POH}{{\mkern 0mu\times\mkern-.3mu}}
\newcommand{\Si}{S}
\newcommand{\cP}{\mathcal{P}}
\newcommand{\sfrac}[2]{\text{\small$\dfrac{#1}{#2}$}}
\author{By~~{\scshape Florian~K.~Richter}}
\date{\small \today}
\title{\bfseries A new elementary proof of the Prime Number Theorem}
\begin{document}

\maketitle
\begin{abstract}
\noindent
Let $\Omega(n)$ denote the number of prime factors of $n$. We show that for any bounded $f\colon\N\to\C$ one has
\[
\frac{1}{N}\sum_{n=1}^N\, f(\Omega(n)+1)=\frac{1}{N}\sum_{n=1}^N\, f(\Omega(n))+\oh_{N\to\infty}(1).
\]
This yields a new elementary proof of the Prime Number Theorem.
\end{abstract}

\thispagestyle{empty}


\section{Introduction}

One of the most fundamental results in mathematics is the Prime Number Theorem, which states that
\begin{equation}
\label{eqn_PNT_asymp_version}
\lim_{N\to\infty}~\frac{|\{p\leq N: p~\text{prime}\}| }{{N}/{\log N}}~=~1.
\end{equation}
%
%
It was conjectured independently by \Gauss{} and Legendre towards the end of the 18$^{\text{th}}$ century and proved independently by Hadamard and de la Vall{\'e}e Poussin in the year 1896.
Their proofs were similar in nature and relied on sophisticated analytic machinery from complex analysis developed throughout the 18$^{\text{th}}$ and 19$^{\text{th}}$ century by the combined effort of many great mathematicians of this era, including Euler, Dirichlet, Chebyshev, and Riemann.
This method of proving the Prime Number Theorem became known as the \define{analytic} method. We refer the reader to \cite{Apostol00, Goldstein73a, Goldstein73b} for more details on the history behind the analytic proof and to \cite{Newman80} for an abridged version of it; see also \cite{Zagier97}.  

Even though it was believed for a long time not to be possible, an \define{elementary} proof of the Prime Number Theorem was eventually found by \Erdos{} and Selberg in \cite{Erdos49,Selberg49}.
In this context, elementary does not necessarily mean simple, but refers to methods that avoid using complex analysis and instead rely only on rudimentary facts from calculus and basic arithmetic identities and inequalities.
Their approach 
was based on Selberg's ``fundamental formula'', which states that
\begin{equation}
\label{eqn_selberg}
\sum_{p\leq x} \log^2(p)\,+\, \sum_{pq\leq x} \log(p)\log(q) ~=~ 2x\log(x) \,+\, \Oh(x).
\end{equation}
We refer to \cite{Levinson69} for a streamlined exposition of the \Erdos{}-Selberg proof, and to \cite{Goldfeld04} and \cite{SG09} for the history behind it. 
See also \cite{Shapiro50} for a short proof of \eqref{eqn_selberg} and \cite{Diamond82,Granville10} for more general surveys on this topic.
A novel and dynamically inspired way of deriving the Prime Number Theorem from \eqref{eqn_selberg}, which bears many similarities to the argument that we present in \cref{sec_proof}, was recently and independently discovered by McNamara \cite{McNamara20draft}.

Today we also know of other elementary ways of proving the Prime Number Theorem. For instance, an alternative elementary proof was found by Daboussi in \cite{Daboussi84}, using what he called the ``convolution method'' (cf.\ \cite[p.~1]{Daboussi89}).
We refer the reader to Chapter~4 in the book of Tenenbaum and Mend{\'e}s France \cite{TM00} for a friendly rendition of Daboussi's argument.
A third elementary proof, which is different from the proofs of \Erdos{}-Selberg and Daboussi,
was provided by Hildebrand in \cite{Hildebrand86a} and 
relies on a corollary of the large sieve (\cite[Corollary 3.2]{Montgomery71}) as a starting point.

The purpose of this paper is to provide yet another elementary proof of the Prime Number Theorem. 
More precisely, we prove the following result, which contains an equivalent form of the Prime Number Theorem as a special case:
\begin{Theorem}
\label{thm_main}
Let $\Omega(n)$ denote the number of prime factors of a positive integer $n$ (counted with multiplicities). Then for any bounded $f\colon\N\to\C$ one has
\begin{align}
\label{eqn_PNT_Omega_version}
\frac{1}{N}\sum_{n=1}^N\, f(\Omega(n)+1)~=~\frac{1}{N}\sum_{n=1}^N\, f(\Omega(n))\,+\,\oh_{N\to\infty}(1).
\end{align}
\end{Theorem}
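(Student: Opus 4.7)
My plan is to convert Theorem~\ref{thm_main} into an iterated Cesaro-averaging statement on the logarithmic scale and then conclude by a compactness-plus-rigidity argument.

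First I would derive the key identity
\[
\frac{1}{N}\sum_{n\le N}f(\Omega(n)) \;=\; \frac{1}{\log N}\sum_{p\le N}\frac{\log p}{p}\cdot\frac{1}{\lfloor N/p\rfloor}\sum_{m\le N/p}f(\Omega(m)+1) \;+\; \oh(1)
\]
by expanding $\sum_{n\le N}f(\Omega(n))\log n$ via $\log n = \sum_{d\mid n}\Lambda(d)$, using the complete additivity of $\Omega$ (so $\Omega(p^k m)=k+\Omega(m)$) to reorganize by $d=p^k$, dismissing the $k\ge 2$ contribution (which is $\Oh(N)$ since $\sum_p\log p/p^k$ converges), and using $\sum_{n\le N}\log n = N\log N + \Oh(N)$ to swap $\log n$ for $\log N$ on the left. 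By Mertens' theorem $\sum_{p\le x}\log p/p = \log x + \Oh(1)$, the probability measure $\mu_N$ placing mass $\log p/(p\log N)$ at $\log(N/p)/\log N\in[0,1]$ has CDF $v+\Oh(1/\log N)$, hence equidistributes to Lebesgue. Writing $a^{(j)}_N := \tfrac{1}{N}\sum_{n\le N}f(\Omega(n)+j)$ and applying the identity to each shift $f(\cdot+j)$ yields $a^{(j)}_N = \int_0^1 a^{(j+1)}_{N^u}\,d\mu_N(u) + \oh(1)$ for every $j\ge 0$.

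Next I would argue by contradiction: suppose $a^{(0)}_N - a^{(1)}_N\not\to 0$, and pick $N_k\to\infty$ with $a^{(0)}_{N_k}-a^{(1)}_{N_k}\to\alpha\ne 0$. A diagonal extraction over $j\ge 0$ and a countable dense subset of $v\in(0,1]$ produces a sub-subsequence along which $a^{(j)}_{N_k^v}\to A^{(j)}(v)$ for every such pair, with $|A^{(j)}(v)|\le \|f\|_\infty$. Passing to the limit in the identity (using that $\mu_{N_k^v}$ equidistributes and the integrand is uniformly bounded) yields
\[
v\,A^{(j)}(v) \;=\; \int_0^v A^{(j+1)}(w)\,dw \qquad (v\in(0,1],\; j\ge 0),
\]
from which each $A^{(j)}$ inherits Lipschitz regularity on compact subsets of $(0,1]$, continuity at $v=1$, and $A^{(0)}(1)-A^{(1)}(1) = \alpha$.

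The main obstacle is the resulting rigidity: I need to show that any uniformly $L^\infty$-bounded family $(A^{(j)})_{j\ge 0}$ satisfying the above functional equation must have $A^{(0)}(1) = A^{(1)}(1)$. Setting $s = -\log v$ and $U_j(s):=A^{(j)}(e^{-s})$ turns the equation into $U_{j+1} = (1-\partial_s)U_j$ on $[0,\infty)$, so $U_j = (1-\partial_s)^j U_0$. A formal Fourier argument says that any non-zero frequency $\xi\ne 0$ in $U_0$ would get amplified by $|1-i\xi|^j = (1+\xi^2)^{j/2}\to\infty$, contradicting $\sup_j\|U_j\|_\infty\le\|f\|_\infty$; hence $U_0$ must be constant, whence all $U_j$ equal the same constant $c$, and the uniform bound $|A^{(j)}(1)|\le\|f\|_\infty$ over all $j$ forces $c=0$, so $\alpha=0$. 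Making the spectral heuristic rigorous is the delicate part: the cleanest route I see is to observe that $\phi(s):=e^{-s}U_0(s)$ satisfies $|\phi^{(j)}(s)|\le\|f\|_\infty e^{-s}$ uniformly in $j$ (via $\partial_s^j(e^{-s}u) = (-1)^j e^{-s}(1-\partial_s)^j u$), so $\phi$ extends to an entire function of exponential type at most one whose growth on $\R$ combined with a Paley--Wiener/Liouville-type analysis forces $\phi(s) = c\,e^{-s}$, i.e., $U_0\equiv c$.
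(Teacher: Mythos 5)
Your route is genuinely different from the paper's. The paper proceeds via a Tur\'an--Kubilius--type second--moment estimate together with an elaborate combinatorial construction of two sets $B_1$ (primes) and $B_2$ ($k$--almost primes) that are pointwise close and pairwise nearly coprime on logarithmic average, all built from Chebyshev--scale prime counts. You instead expand $\log n=\sum_{d\mid n}\Lambda(d)$, use Mertens to get the recursive averaging identity $a^{(j)}_N=\int a^{(j+1)}_{N^u}\,d\mu_N(u)+\oh(1)$, pass to subsequential limits, and invoke a Paley--Wiener--type rigidity for the operator $1-\partial_s$. This is a legitimate and interesting alternative, closer in spirit to Tauberian/Hal\'asz--style arguments; the price is that you import Fourier--analytic machinery (entire functions of exponential type, Paley--Wiener) rather than staying with the purely Chebyshev--level tools the paper advertises.

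However, there is a real gap in the limiting step. You write ``passing to the limit in the identity (using that $\mu_{N_k^v}$ equidistributes and the integrand is uniformly bounded),'' but equidistribution of $\mu_{N_k^v}$ plus $L^\infty$--boundedness of the integrand is \emph{not} sufficient to pass to the limit, and the Lipschitz regularity of the $A^{(j)}$ that you later invoke is a \emph{consequence} of the functional equation, so cannot be assumed beforehand. Concretely, $\mu_{N_k^v}$ is an atomic measure whose support points shift with $k$, while $a^{(j+1)}_{N_k^{uv}}$ as a function of $u$ has total variation of order $\log N_k$, so the standard ``CDF error $\times$ total variation'' estimate gives $\Oh(1)$, not $\oh(1)$. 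The correct route is to test the identity against $\phi\in C_c^\infty((0,1])$ in the $v$--variable, swap the order of summation/integration, and use weak--$*$ convergence of $a^{(j+1)}_{N_k^{\,\cdot}}$ together with \emph{uniform} convergence of the auxiliary kernel coming from Mertens; this gives $vA^{(j)}(v)=\int_0^vA^{(j+1)}(w)\,dw$ almost everywhere, after which Lipschitz continuity on $(0,1]$ follows for free. But then you face a second gap: the pointwise limit $A^{(j)}(1)=\lim_k a^{(j)}_{N_k}$ you need to feed the contradiction is \emph{not} controlled by the weak--$*$ limit, and you must separately identify the two (e.g.\ by applying the identity one more time and verifying that the boundary value is recovered). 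Neither step is automatic, and the proposal is silent on both.

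Two smaller points. The assertion that the uniform bound ``forces $c=0$'' is false (take $f\equiv 1$: then $a^{(j)}_N\equiv 1$ and $c=1$); fortunately you only need $U_0$ constant, so that $A^{(0)}(1)-A^{(1)}(1)=U_0(0)-U_1(0)=0$, and the erroneous $c=0$ claim is harmless. The Paley--Wiener rigidity itself does go through: from $|\phi^{(j)}(s)|\le e^{-s}\|f\|_\infty$ on $[0,\infty)$ one extends $\phi$ to an entire function with $|\phi(s)|\le\|f\|_\infty e^{-\Re s+|\Im s|}$, so $G=e^s\phi$ is entire of exponential type $\le 1$ and bounded on $\R$, and testing $\hat G$ against bumps $\psi_j=(1-i\xi)^{-j}\psi$ supported away from $\xi=0$ shows $\hat G$ must be supported at the origin. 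But this does require carrying out the distributional argument, which the proposal leaves as a heuristic.
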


Letting $\lio(n)=(-1)^{\Omega(n)}$ denote the classical \define{Liouville function}, it immediately follows from \cref{thm_main} applied to the sequence $f(n)=(-1)^n$ that 
\begin{align}
\label{eqn_PNT_lio_version}
\lim_{N\to\infty}\,\frac{1}{N}\sum_{n=1}^N\, \lio(n)  \,=\, 0.
\end{align}
This is a well-known equivalent form of the Prime Number Theorem.\footnote{The validity of \eqref{eqn_PNT_lio_version} was first observed by von Mangoldt in \cite[p.\ 852]{vonMangoldt97} and the equivalence between \eqref{eqn_PNT_lio_version} and \eqref{eqn_PNT_asymp_version} was later realized by Landau (see \cite[\S 4]{Landau99} and \cite{Landau11} and \cite[pp.\ 620--621]{Landau09b} and \cite[pp.\ 631--632]{Landau09b}). See also \cite[p.~55]{Tenenbaum95}.}

\cref{thm_main} also recovers other results in multiplicative number theory.
For instance, by considering $f(n)=\zeta^n$ where $\zeta\neq 1$ is a $m$-th root of unity, we obtain from \eqref{eqn_PNT_Omega_version} that
\[
\lim_{N\to\infty}\,\frac{1}{N}\sum_{n=1}^N\, \zeta^{\Omega(n)}  \,=\, 0.
\]
This implies a theorem of Pillai and Selberg \cite{Pillai40, Selberg39}, which says that for $m\in\N$ and  $r\in\{0,1,\ldots,m-1\}$ the set $\{n\in\N: \Omega(n)\equiv r\bmod m\}$ has asymptotic density $1/m$.  
In a similar vein, \eqref{eqn_PNT_Omega_version} applied to sequences of the form $f(n)=e^{2\pi i \alpha n}$ for $\alpha\in\R\setminus\Q$ yields a classical result of \Erdos{} and Delange (see \cite[p.\ 2, lines 4--5]{Erdos46} and \cite{Delange58}), asserting that $(\Omega(n)\alpha)_{n\in\N}$ is uniformly distributed mod~$1$ for any irrational $\alpha$.
\cref{thm_main} also recovers several results recently obtained by the author in \cite{BR20arXiv}, including \cite[Theorem A]{BR20arXiv}.

The proof of \cref{thm_main} is self-contained (with the exception of Stirling's approximation formula used in \cref{sec_proof_of_prop} without a proof) and was inspired by the author's work in \cite{BR20arXiv}. It is worth noting that this is the first proof of the Prime Number Theorem that builds on Chebyshev's original idea of estimating the number of primes between $n$ and~$2n$.

\section{The proof}
\label{sec_proof}

A well-known relation in number theory, which is often regarded as a corollary of the \Turan{}-Kubilius inequality  (cf.\ \cite[Lemma~1]{Daboussi75}, \cite[Eq.~(3.1)]{Katai86}, and \cite[Lemma 4.7]{Elliott79}), states that for any finite set of primes $\cP$ one has
\begin{equation}
\label{eqn_TK_original}
\limsup_{N\to\infty}\,\frac{1}{N}\sum_{n=1}^N \, \Biggl|\, \sum_{p\in \cP} 1_{p\mid n} - \sum_{p\in \cP}\sfrac{1}{p}\, \Biggr|^2 \,=\, \Oh\Biggl( \sum_{p\in \cP}\sfrac{1}{p}\Biggr),
\end{equation}
where $\1_{p\mid n}$ denotes the function that is $1$ if $p$ divides $n$ and $0$ otherwise.
It is common to interpret \eqref{eqn_TK_original} using a probabilistic point of view: By considering $\{1,2,\ldots,N\}$ as a discrete probability space (with normalized counting measure as the probability measure) and $\sum_{p\in \cP} 1_{p\mid n}$ as a random variable on this space, \eqref{eqn_TK_original} says that for large $N$ the expected number of primes in $\cP$ that divide a "randomly chosen" $n\in\{1,\ldots,N\}$ approximately equals $\sum_{p\in \cP}1/p$, with a standard deviation on the scale of $(\sum_{p\in \cP}1/p)^{1/2}$. 

An important role in our proof of \cref{thm_main} is played by a generalization of~\eqref{eqn_TK_original} where the finite set of primes $\cP$ is replaced by an arbitrary finite set of positive integers $B\subset\N$.

\begin{Proposition}
\label{prop_coprimality_criterion}
Suppose $B\subset\N$ is finite and non-empty. Then 
\begin{equation}
\label{eqn_coprimality_criterion_0}
\frac1N\sum_{n=1}^N \, \Biggl|\, \sum_{q\in B} 1_{q\mid n} - \sum_{q\in B}\sfrac{1}{q}\, \Biggr|^2\,=\,\sum_{q\in B}\sum_{q'\in B}\sfrac{\Phi(q,q')}{q q'}+\Oh\biggl(\frac{|B|^2}{N}\biggr),
\end{equation}
where $\Phi\colon \N\POH\N\to \N\cup\{0\}$ is the function $\Phi(m,n)\coloneqq\gcd(m,n)-1$.
\end{Proposition}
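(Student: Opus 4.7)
The plan is to expand the square and reduce everything to the elementary estimate $\frac{1}{N}\sum_{n=1}^N \mathbf{1}_{d\mid n}=\frac{\lfloor N/d\rfloor}{N}=\frac{1}{d}+\Oh(1/N)$, which is uniform in $d$. This is essentially a variance calculation: the $O(|B|^2/N)$ on the right-hand side forces us to sum $|B|^2$ cross-term errors, each of size $O(1/N)$, so we never need anything sharper than the floor-function estimate.

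Concretely, I would first expand
\[
\frac{1}{N}\sum_{n=1}^N\Biggl|\sum_{q\in B}1_{q\mid n}-\sum_{q\in B}\tfrac{1}{q}\Biggr|^2=\sum_{q\in B}\sum_{q'\in B}\frac{1}{N}\sum_{n=1}^N\bigl(1_{q\mid n}-\tfrac{1}{q}\bigr)\bigl(1_{q'\mid n}-\tfrac{1}{q'}\bigr).
\]
For each pair $(q,q')$ I would use the key identity $1_{q\mid n}\cdot 1_{q'\mid n}=1_{\mathrm{lcm}(q,q')\mid n}$ and then apply the floor estimate three times (to the $1_{q\mid n}1_{q'\mid n}$ term and to the two linear terms) to obtain
\[
\frac{1}{N}\sum_{n=1}^N\bigl(1_{q\mid n}-\tfrac{1}{q}\bigr)\bigl(1_{q'\mid n}-\tfrac{1}{q'}\bigr)=\frac{1}{\mathrm{lcm}(q,q')}-\frac{1}{qq'}+\Oh\!\left(\frac{1}{N}\right).
\]

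Next I would use $\mathrm{lcm}(q,q')=qq'/\gcd(q,q')$ to rewrite the main term as
\[
\frac{1}{\mathrm{lcm}(q,q')}-\frac{1}{qq'}=\frac{\gcd(q,q')-1}{qq'}=\frac{\Phi(q,q')}{qq'},
\]
which is exactly the summand appearing in \eqref{eqn_coprimality_criterion_0}. Summing over the $|B|^2$ pairs $(q,q')\in B\times B$ absorbs the individual $O(1/N)$ errors into a single $O(|B|^2/N)$ term, yielding the claim.

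I do not anticipate any real obstacle: there is no arithmetic input beyond $\mathrm{lcm}\cdot\gcd=qq'$ and the trivial floor estimate, and the uniformity in $q,q'$ is automatic. The only mild point to be careful about is that $B$ is an arbitrary set of positive integers (not necessarily primes), so I must make sure to invoke the lcm identity rather than tacitly assuming $\gcd(q,q')=1$ for $q\neq q'$; this is precisely why the correction term $\Phi(q,q')$ survives in the general statement as opposed to the classical \Turan{}-Kubilius version \eqref{eqn_TK_original}.
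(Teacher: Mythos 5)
The proposal is correct and takes essentially the same approach as the paper: expand the square and reduce everything to the uniform estimate $\tfrac{1}{N}\sum_{n\leq N}\1_{d\mid n}=1/d+\Oh(1/N)$, together with the identity $\1_{q\mid n}\1_{q'\mid n}=\1_{\mathrm{lcm}(q,q')\mid n}$ (equivalently $\gcd(q,q')/(qq')$ for the main term). You organize the expansion pairwise, whereas the paper collects terms as $\Si_1-2a\Si_2+a^2$ with $a=\sum_{q\in B}1/q$, but this is only a difference in algebraic bookkeeping and the error accounting is the same.
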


Note that \eqref{eqn_coprimality_criterion_0} implies \eqref{eqn_TK_original}, because for any finite set of primes $\cP$ one has
\[
\sum_{p\in \cP}\sum_{p'\in \cP}\sfrac{\Phi(p,p')}{p p'} \,=\,\sum_{p\in \cP}\sfrac{1}{p}\Bigl(1-\sfrac{1}{p}\Bigr) \,=\, \Oh\Biggl( \sum_{p\in \cP}\sfrac{1}{p}\Biggr).
\]

\begin{proof}[Proof of \cref{prop_coprimality_criterion}]
Define $a\coloneqq \sum_{q\in B}1/q$.
By expanding the square 
in \eqref{eqn_coprimality_criterion_0} we get
$\frac{1}{N}\sum_{n=1}^N | \sum_{q\in B} 1_{q\mid n} - \sum_{q\in B}{1}/{q}|^2 =\Si_1 -2 a \Si_2+ a^2$, 
where $\Si_1\coloneqq \frac{1}{N}\sum_{n=1}^N\sum_{q,q'\in B} 1_{q\mid n}1_{q'\mid n}$ and $\Si_2\coloneqq \frac{1}{N}\sum_{n=1}^N \sum_{q\in B} 1_{q\mid n}$.
Since $\frac{1}{N}\sum_{n=1}^N \1_{q\mid n}= 1/q+\Oh({1}/{N})$, we obtain $\Si_2 = a + \Oh({|B|}/{N})$.
On the other hand, $\frac{1}{N}\sum_{n=1}^N \1_{q\mid n}\1_{q'\mid n} =  \gcd(q,q')/qq' + \Oh({1}/{N})$ implies
$S_1= \sum_{q\in B}\sum_{q'\in B} \Phi(q,q')/qq' +  a^2 + \Oh(|B|^2/N)$.
Substituting these estimates into $\Si_1 -2 a \Si_2+ a^2$ finishes the proof of~\eqref{eqn_coprimality_criterion_0}.
\end{proof}

Before we proceed further, it will be convenient to rewrite \eqref{eqn_coprimality_criterion_0} using the language of averages.  
Given a finite set $A\subset \N$ and an arithmetic function $f\colon A\to\C$, we denote the \define{\Cesaro{} average} and the \define{logarithmic average} of $f$ over $A$ respectively by 
\[
\BEu{n\in A} f(n) \coloneqq\, \frac{1}{|A|}\sum_{n\in A} f(n)\qquad\text{and }\qquad\BEul{n\in A} f(n) \coloneqq\, \frac{\sum_{n\in A}\, {f(n)}/{n}}{\sum_{n\in A}\, {1}/{n}}.
\]
Also let $[x]$ abbreviate the set $\{1,2,\ldots,\lfloor x\rfloor\}$.
After dividing both sides of \eqref{eqn_coprimality_criterion_0} by $(\sum_{q\in B} 1/q)^2$, we obtain the following equivalent version of it expressed in terms of averages:
\begin{equation}
\label{eqn_coprimality_criterion}
\BEu{n\in [N]}\hspace{-0.15em} \left|\, \BEul{q\in B} \big(q\1_{q\mid n}-1\big)\, \right|^2\,=~\BEul{q\in B}\,\BEul{q'\in B}\,\Phi(q,q')+ \Oh\biggl(\sfrac{|B|^2}{N}\biggr).
\end{equation}

The following proposition is our main technical result.  
By combining it with \eqref{eqn_coprimality_criterion}, we will be able to finish the proof of \cref{thm_main} rather quickly.

\begin{Proposition}
\label{prop_coprimaility_measures}
For all $\etan>0$, there exists $k_0\in\N$ such that for all $k\geq k_0$ there exist two finite, non-empty sets $B_1,B_2\subset\N$ with the following properties: 
\begin{enumerate}	
[label=(\alph{enumi}),ref=(\alph{enumi}),leftmargin=*]
\item\label{itm_a}
all elements in $B_1$ are primes and all elements in $B_2$ are a product of exactly $k$ primes;
\item\label{itm_b}
the sets $B_1$ and $B_2$ have the same cardinality and if $B_1=\{p_1<\ldots< p_t\}$ and $B_2=\{q_1< \ldots< q_t\}$ then $(1-\etan)p_j\leq q_j\leq (1+\etan)p_j$ holds for all $j=1,\ldots,t$;
\item\label{itm_c}
$\mathbb{E}^{\log}_{m\in B_i}\mathbb{E}^{\log}_{n\in B_i} \Phi(m,n)\leq \etan$  for $i=1,2$, where $\Phi$ is as in \cref{prop_coprimality_criterion}.
\end{enumerate}
\end{Proposition}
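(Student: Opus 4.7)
My plan is to construct $B_2$ as a set of $k$-almost primes with a Cartesian-product structure---which provides a natural factorization of the logarithmic measure and makes condition (c) for $B_2$ verifiable by a one-dimensional estimate---and then to construct $B_1$ as a matching set of primes living in the same multiplicative range, using a bucket-sorting procedure to secure condition (b).

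Concretely, given $\etan>0$ and $k$ large, I would first choose $k$ pairwise disjoint intervals of primes $\mathcal{P}_1<\mathcal{P}_2<\cdots<\mathcal{P}_k$ (i.e.\ $\max\mathcal{P}_i<\min\mathcal{P}_{i+1}$) with $S_i:=\sum_{p\in\mathcal{P}_i}\sfrac{1}{p}\geq \sfrac{2k}{\etan}$ for each $i$; their existence follows from Mertens' elementary estimate $\sum_{p\leq x}\sfrac{1}{p}=\log\log x+\Oh(1)$. Define $Q:=\{p_1 p_2\cdots p_k:p_i\in\mathcal{P}_i\}$; every element has $\Omega=k$ and the factorization $n=p_1\cdots p_k$ is unique by the disjointness of the $\mathcal{P}_i$, so the logarithmic measure on $Q$ factorizes as the product of the logarithmic measures on the primes in each $\mathcal{P}_i$. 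Moreover only equal factors can contribute to the gcd, so $\gcd(p_1\cdots p_k,p_1'\cdots p_k')=\prod_{i:\,p_i=p_i'} p_i$, and a short calculation gives
\[
\BEul{m\in Q}\BEul{n\in Q}\gcd(m,n)=\prod_{i=1}^{k}\BEul{p\in\mathcal{P}_i}\BEul{p'\in\mathcal{P}_i}\gcd(p,p')\leq\prod_{i=1}^{k}\bigl(1+\sfrac{1}{S_i}\bigr)\leq e^{\etan/2}\leq 1+\etan,
\]
so that condition (c) is satisfied by the full set $Q$. For condition (c) on $B_1$, since distinct primes are coprime the off-diagonal gcd terms vanish and the expression collapses to $\leq 1/\sum_{p\in B_1}\sfrac{1}{p}$, which is $\leq \etan$ provided $B_1$ spans a sufficiently wide multiplicative range---arranged by what follows.

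To enforce condition (b) together with $|B_1|=|B_2|$, I would partition $[Y_1,Y_2]:=[\prod_i\min\mathcal{P}_i,\prod_i\max\mathcal{P}_i]$ into short multiplicative subintervals $J_\ell$ of ratio $1+\etan$, and in each $J_\ell$ select $B_2\cap J_\ell\subseteq Q\cap J_\ell$ and $B_1\cap J_\ell\subseteq\{\text{primes in }J_\ell\}$ of equal cardinality; the monotone sorted pairing then keeps corresponding entries of $B_1$ and $B_2$ in a common $J_\ell$, yielding (b) with the factor $1\pm\etan$. The main obstacle here is preserving condition (c) for $B_2$ under this bucket-wise subsampling, since the factorization argument above only controls $\BEul\BEul\Phi$ on the full set $Q$, not on an arbitrary subset. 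I plan to address this by a first-moment argument: averaging $\BEul{m\in B_2}\BEul{n\in B_2}\Phi(m,n)$ over uniformly random bucket-respecting subsamples $B_2\subseteq Q$ recovers essentially the bound already obtained for $Q$, and Markov's inequality then extracts a deterministic admissible $B_2$. The bucket-wise inequality $\pi(J_\ell)\geq |Q\cap J_\ell|$ needed to draw $B_1$ from primes in each $J_\ell$ is a consequence of elementary Chebyshev-type bounds, while the requirement $\sum_{p\in B_1}\sfrac{1}{p}\geq \sfrac{1}{\etan}$ for condition (c) on $B_1$ is automatic because the constraint $S_i\geq \sfrac{2k}{\etan}$ forces $[Y_1,Y_2]$ to be extremely wide.
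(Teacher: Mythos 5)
The broad architecture of your proposal---building $B_2$ as a Cartesian product of primes so that the logarithmic measure factorizes and (c) becomes a one-dimensional calculation, then matching primes to almost-primes in short multiplicative intervals to secure (b)---is genuinely the right shape, and your coprimality calculation for the full product set $Q$ is correct. But there is a gap at exactly the point where you write that ``the bucket-wise inequality $\pi(J_\ell)\geq |Q\cap J_\ell|$ \dots is a consequence of elementary Chebyshev-type bounds.'' Chebyshev gives upper bounds on $\pi((1+\etan)x)-\pi(x)$ via $\log\binom{\sigma x}{x}\geq \log(x)|\P\cap(x,\sigma x]|$ (Lemma~\ref{lem_chebyshev_type_estimates_upper}), but it gives no \emph{lower} bound on the number of primes in an interval of multiplicative width $1+\etan$ with $\etan$ small: it is entirely consistent with the Chebyshev-level facts you allow yourself that an arbitrarily long stretch of your buckets $J_\ell$ contain no primes at all. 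Without that lower bound the bucket-by-bucket matching collapses, and with it the first-moment argument for (c) on $B_2$, since you have no lower bound on how much logarithmic mass of $Q$ survives the subsampling, while (c) is a ratio that deteriorates when the denominator $\sum_{m\in B_2}1/m$ shrinks.

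This is precisely the obstacle that the paper's Lemmas~\ref{cor_x_y} and~\ref{lem_new_sums} are built to overcome, and they embody the idea your sketch is missing. The paper does not try to show every short interval contains primes; instead Lemma~\ref{cor_x_y} uses pigeonhole plus the Chebyshev upper bound to locate, inside every $(8^n,8^{n+1}]$, two short subintervals $(8^x,8^{x+\delta}]$, $(8^y,8^{y+\delta}]$ that are rich in primes, with the crucial extra control $\epsilon^4<y-x<\epsilon$. Lemma~\ref{lem_new_sums} then exploits this $\epsilon^4$-versus-$\epsilon$ gap as a combinatorial degree of freedom: for any choice of integer slots $n_1,\ldots,n_k$, one can steer the sum $z_1+\cdots+z_k$ of the chosen exponents to land within $\epsilon$ of another good exponent $z\in\mathcal{X}$, so that the products $P_{z_1}\cdots P_{z_k}$ fall into a short interval that is \emph{known} to contain enough primes. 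Finally, the size estimate $|P_{z_1}\cdots P_{z_k}|\leq D^k 8^{\sum n_i}/\prod n_i\leq D\,8^{\lfloor z\rfloor}/\lfloor z\rfloor$ (using $D<1$ and $\prod n_i\geq\sum n_i$) makes the injection into the prime set $P_z$ possible. In your wide-interval setup, with $S_i\geq 2k/\etan$ forcing $\mathcal{P}_i$ to span a multiplicative range of order $\exp(\exp(2k/\etan))$, you have no analogous mechanism to guarantee that the (highly clustered) products sit over prime-rich buckets, nor any analogue of the $\prod n_i\geq\sum n_i$ counting inequality; so the step you most need is exactly the one your proof defers to ``elementary Chebyshev-type bounds,'' and that deferral does not go through.
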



\begin{proof}[Proof of \cref{thm_main} assuming \cref{prop_coprimaility_measures}]
Fix $\etan>0$ and let $k_0\in\N$ be as guaranteed by \cref{prop_coprimaility_measures}.
This means that for every $k\geq k_0$ we can find two finite and non-empty sets $B_1,B_2\subset\N$ satisfying properties \ref{itm_a}, \ref{itm_b}, and~\ref{itm_c}.
For any $g\colon\N\to\C$ with $|g(n)|\leq 1$ we thus have
\begin{align*}
\left|\BEu{n\in [N]}\hspace{-.05em}g(\Omega(n))-
\BEul{q\in B_2}\hspace{-.05em} \BEu{n\in [\nicefrac{N}{q}]}\hspace{-.05em}  g(\Omega(qn))
\right|^2&\hspace{-.3em}=
\left|\BEu{n\in [N]}\hspace{-.05em}g(\Omega(n))-
\BEul{q\in B_2}\hspace{-.05em}\BEu{n\in [N]}\hspace{-.08em} q \1_{q\mid n}g(\Omega(n))
\right|^2\hspace{-.33em}+\hspace{-.1em}\Oh\Big(\sfrac{1}{N}\Big)
\\
&\,\leq\, \BEu{n\in [N]} \left|\, \BEul{q\in B_2} \big(1- q \1_{q\mid n}\big)\,\right|^2+\,\Oh\Big(\sfrac{1}{N}\Big)
\\
&\,\leq\,\etan\,+\,\Oh\Big(\sfrac{1}{N}\Big),
\end{align*}
where the second to last inequality follows from the Cauchy-Schwarz inequality and the last inequality follows from property \ref{itm_c} combined with \eqref{eqn_coprimality_criterion}. Since $\Omega(nq)=\Omega(n)+\Omega(q)$, we get
\begin{align}
\label{eqn_TKed_mob_1-2}
\BEu{n\in [N]}\,g(\Omega(n))&~=~ \BEul{q\in B_2}\, \BEu{n\in [\nicefrac{N}{q}]} \, g(\Omega(n)+\Omega(q)) ~+~\Oh\left(\etan^{1/2}+N^{-1/2}\right).
\end{align}
An analogous calculation carried out with $B_1$ in place of $B_2$ and $g(n+k-1)$ in place of $g(n)$ yields
\begin{align}
\label{eqn_TKed_mob_1-1}
\BEu{n\in [N]}\,g(\Omega(n)+k-1)&~=~ \BEul{p\in B_1}\, \BEu{n\in [\nicefrac{N}{p}]} \, g(\Omega(n)+\Omega(p)+k-1) ~+~\Oh\left(\etan^{1/2}+N^{-1/2}\right).
\end{align}
Recall that $B_1$ consists only of primes and $B_2$ only of $k$-almost primes, which means $\Omega(p)=1$ for all $p\in B_1$ and $\Omega(q)=k$ for all $q\in B_2$. This allows us to rewrite \eqref{eqn_TKed_mob_1-2} and \eqref{eqn_TKed_mob_1-1} as
\begin{align}
\label{eqn_TKed_mob_1}
\BEu{n\in [N]}\,g(\Omega(n))&~=~ \BEul{q\in B_2}\, \BEu{n\in [\nicefrac{N}{q}]} \, g(\Omega(n)+k) ~+~\Oh\left(\etan^{1/2}+N^{-1/2}\right),
\\
\label{eqn_TKed_mob_2}
\BEu{n\in [N]}\,g(\Omega(n)+k-1)&~=~ \BEul{q\in B_1}\, \BEu{n\in [\nicefrac{N}{q}]} \, g(\Omega(n)+k) ~+~\Oh\left(\etan^{1/2}+N^{-1/2}\right).
\end{align}
Finally, let $B_1=\{p_1<\ldots< p_t\}$ and $B_2=\{q_1< \ldots< q_t\}$ be enumerations of $B_1$ and $B_2$.
Since $(1-\etan)p_j\leq q_j\leq (1+\etan)p_j$, it follows that  
$\mathbb{E}_{n\in [N/p_j]}  g(\Omega(n)+k) =  \mathbb{E}_{n\in [N/q_j]}  g(\Omega(n)+k)+\Oh(\etan)$.
Taking logarihtmic averages over $B_1=\{p_1,\ldots,p_t\}$ and $B_2=\{q_1,\ldots,q_t\}$ (cf.~\cite[Lemma 2.3]{BR20arXiv}) leaves us with 
\begin{equation}
\label{eqn_TKed_mob_3}
\BEul{p\in B_1}\, \BEu{n\in [\nicefrac{N}{p}]} \, g(\Omega(n)+k)~=~\BEul{q\in B_2}\, \BEu{n\in [\nicefrac{N}{q}]} \,g(\Omega(n)+k) ~+~\Oh(\etan).
\end{equation}
From \eqref{eqn_TKed_mob_1}, \eqref{eqn_TKed_mob_2}, and \eqref{eqn_TKed_mob_3} it follows that $\mathbb{E}_{n\in [N]}g(\Omega(n))=\mathbb{E}_{n\in [N]}g(\Omega(n)+k-1)+\Oh(\etan^{1/2}+N^{-1/2})$. This holds for all $k\geq k_0$ and hence
\begin{align}
\label{eqn_PNT_Omega_version_2}
\BEu{n\in [N]}\, g(\Omega(n)+k)~=~\BEu{n\in [N]}\, g(\Omega(n)+l)\,+\,\Oh\left(\etan^{1/2}+N^{-1/2}\right)
\end{align}
for all $k,l\geq k_0$.
Relation \eqref{eqn_PNT_Omega_version} now follows from \eqref{eqn_PNT_Omega_version_2} by taking $k=k_0$, $l=k_0+1$, $g(n)=f(n-k_0)$, and letting $\etan$ go to $0$.  
\end{proof}

\section{Proof of \cref{prop_coprimaility_measures}}
\label{sec_proof_of_prop}

The starting point for our proof of \cref{prop_coprimaility_measures} are Chebyshev-type estimates on the number of primes in intervals. More precisely, we derive a rough lower bound on the number of primes in $(8^x,8^{x+1}]$, as well as a rough upper bound on the number of primes in $(8^x,8^{x+\epsilon}]$ for small $\epsilon$. 

\begin{Proposition}
\label{prop_chebyshev_type_estimates}
Let $\P$ be the set of primes. There are $x_0\geq 1$ and $\epsilon_0>0$ such that
\begin{enumerate}	
[label=(\roman{enumi}),ref=(\roman{enumi}),leftmargin=*]
\item\label{itm_i}
$|\P\cap (8^x,8^{x+1}]|\geq \frac{8^{x}}{x}$ for all $x\geq x_0$, and
\item\label{itm_ii}
$|\P\cap (8^x,8^{x+\epsilon}]|\leq \frac{\sqrt{\epsilon}\, 8^{x}}{x}$ for all $x\geq x_0$ and $\epsilon\in (0,\epsilon_0]$.
\end{enumerate}

\end{Proposition}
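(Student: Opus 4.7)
The plan is to derive both estimates by Chebyshev-style arguments: for each bound I pick a binomial coefficient that captures the primes in the relevant interval, control its logarithm by Stirling's formula (or elementary tail estimates on factorials), and then translate the resulting bound on $\theta(y)\coloneqq\sum_{p\leq y}\log p$ into the desired bound on $|\P\cap I|$ by dividing by $\log(\max I)$.

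For part \ref{itm_i}, I would work with $\binom{8N}{4N}$, where $N=\lfloor 8^x\rfloor$. The elementary inequality $\binom{2m}{m}\geq 2^{2m}/(2m+1)$ gives $\log\binom{8N}{4N}\geq 8N\log 2-\Oh(\log N)$. On the other hand, Legendre's formula expresses $\log\binom{8N}{4N}=\sum_p v_p\!\left(\binom{8N}{4N}\right)\log p$; primes $p\leq\sqrt{8N}$ each contribute at most $\log(8N)$, for a total of $\Oh(\sqrt{N}\log N)$, while for $p>\sqrt{8N}$ one has $v_p\!\left(\binom{8N}{4N}\right)\in\{0,1\}$. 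Combining the two yields $\theta(8N)\geq 8N\log 2-\Oh(\sqrt{N}\log N)$. Separately, the classical upper bound $\theta(N)\leq 2N\log 2$ follows from the standard estimate $\prod_{m<p\leq 2m}p\leq\binom{2m}{m}\leq 4^m$ telescoped over dyadic intervals. Subtracting gives $\theta(8N)-\theta(N)\geq 6N\log 2-\Oh(\sqrt{N}\log N)$, and dividing through by $\log(8N)=3(x+1)\log 2$ produces $|\P\cap(8^x,8^{x+1}]|\geq \frac{2\cdot 8^x}{x+1}(1-\oh(1))$, which exceeds $8^x/x$ for all sufficiently large $x$.

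For part \ref{itm_ii}, set $N=\lfloor 8^x\rfloor$ and $M=\lfloor 8^{x+\epsilon}\rfloor$. For $\epsilon$ small enough that $M<2N$, every prime $p\in(N,M]$ satisfies $p>\max\{M-N,\sqrt{M}\}$, so Legendre's formula gives $v_p\!\left(\binom{M}{N}\right)=\lfloor M/p\rfloor-\lfloor N/p\rfloor-\lfloor(M-N)/p\rfloor=1$; hence $\prod_{N<p\leq M}p$ divides $\binom{M}{N}$, so $\theta(M)-\theta(N)\leq\log\binom{M}{N}$. The inequality $\binom{M}{k}\leq(eM/k)^k$ with $k=M-N$ then yields $\log\binom{M}{N}\leq(M-N)\bigl(1+\log\tfrac{M}{M-N}\bigr)$. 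Since $M-N=8^x(8^\epsilon-1)+\Oh(1)\leq 2\epsilon(\log 8)\cdot 8^x$ for $\epsilon$ small, this upper bound is $\Oh(\epsilon\log(1/\epsilon)\cdot 8^x)$, whence $|\P\cap(8^x,8^{x+\epsilon}]|=\Oh(\epsilon\log(1/\epsilon)\cdot 8^x/x)$. Because $\sqrt{\epsilon}\log(1/\epsilon)\to 0$ as $\epsilon\to 0$, the right-hand side falls below $\sqrt{\epsilon}\cdot 8^x/x$ once $\epsilon\leq\epsilon_0$ for a suitable $\epsilon_0$.

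The main anticipated obstacles are purely quantitative. For part \ref{itm_i} I need to confirm that the implied constants fit together with enough slack: specifically, that after subtracting the Chebyshev upper bound $\theta(N)\leq 2N\log 2$ from the lower bound $\theta(8N)\geq 8N\log 2-\Oh(\sqrt{N}\log N)$ and dividing by $\log(8N)$, the coefficient of $8^x/(x+1)$ comes out equal to $2$, so the inequality $\geq 8^x/x$ survives once the lower-order terms are absorbed into the choice of $x_0$. For part \ref{itm_ii} the issue is gentler: I simply need to fix $\epsilon_0$ small enough that $M<2N$ and $\epsilon\log(1/\epsilon)\leq\sqrt{\epsilon}$ hold simultaneously, both of which are easy.
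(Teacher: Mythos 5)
Your proof is correct and is, in spirit, the same Chebyshev--via--binomial--coefficients argument used in the paper, but the organization of part~\ref{itm_i} is genuinely different. The paper first establishes two general lemmas: a lower bound $|\P\cap(1,y]|\geq\frac{y\log 2}{\log y}+\Oh(1)$ via $\binom{2n}{n}\leq(2n)^{|\P\cap(1,2n]|}$ (\cref{lem_chebyshev_type_estimates_lower}), and an upper bound $|\P\cap(y,\sigma y]|\leq\frac{\beta(\sigma)\,y}{\log y}+\Oh(1)$ from Stirling applied to $\binom{\sigma y}{y}$ (\cref{lem_chebyshev_type_estimates_upper}); it then deduces~\ref{itm_i} by decomposing $(8^x,8^{x+1}]=(1,8^{x+1}]\setminus\bigcup_{0\leq n\leq 3x}(8^x/2^{n+1},8^x/2^n]$ and subtracting the $\Oh(x)$-many dyadic upper bounds from the single lower bound. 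You instead carry out the whole argument at the level of $\theta(y)=\sum_{p\leq y}\log p$: the central binomial coefficient $\binom{8N}{4N}$, with the contributions of primes split at $\sqrt{8N}$, gives $\theta(8N)\geq 8N\log 2-\Oh(\sqrt{N}\log N)$; the dyadic telescope gives the Chebyshev bound $\theta(N)\leq cN\log 2$; and you convert back to a prime count only at the very end by dividing $\theta(8N)-\theta(N)$ by $\log(8N)$. That is the more classical route and compresses the dyadic telescoping into a single clean inequality rather than an $\Oh(x)$-term sum; the paper's route avoids introducing $\theta$ at all and lets it reuse \cref{lem_chebyshev_type_estimates_upper} for both halves of the proposition. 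For part~\ref{itm_ii} the two arguments coincide in substance: both rest on the fact that each prime in $(N,M]$ divides $\binom{M}{N}$ exactly once, with your elementary bound $\binom{M}{k}\leq(eM/k)^k$ standing in for the Stirling estimate of $\beta(8^\epsilon)$, and both deliver a bound of order $\epsilon\log(1/\epsilon)\cdot 8^x/x=\oh(\sqrt{\epsilon}\cdot 8^x/x)$. One small caveat: telescoping $\binom{2m}{m}\leq 4^m$ over dyadic blocks in the naive way only gives $\theta(N)\leq 4N\log 2$ for general $N$; the sharper constant $2$ needs the usual refinement via $\binom{2m+1}{m}\leq 4^m$ and induction. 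Since your final comparison requires only $c<5$ in $\theta(N)\leq cN\log 2$, either version suffices, but as written that step is slightly understated.
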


The ideas used in the proof of \cref{prop_chebyshev_type_estimates} are classical and date back to Chebyshev. We begin with the following lemma.

\begin{Lemma}
\label{lem_chebyshev_type_estimates_lower}
We have the asymptotic estimate
\begin{equation}
\label{eqn_chebyshev_type_estimate_0}
\big|\P\cap(1,x]\big|\,\geq\, \frac{x \log(2)}{\log x}+\Oh(1).
\end{equation}
\end{Lemma}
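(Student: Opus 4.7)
The plan is to follow the classical Chebyshev argument based on the central binomial coefficient $\binom{2n}{n}$, extracting the lower bound on $\pi(x) = |\mathbb{P}\cap(1,x]|$ from a two-sided estimate on this quantity.

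First I would set up the lower bound on the binomial coefficient. Since $\binom{2n}{n}$ is the largest among the $2n+1$ terms of $\sum_{k=0}^{2n}\binom{2n}{k} = 4^n$, we get $\binom{2n}{n} \geq \frac{4^n}{2n+1}$. Taking logarithms gives $\log\binom{2n}{n} \geq 2n\log 2 - \log(2n+1)$.

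Next I would bound $\binom{2n}{n}$ from above in terms of $\pi(2n)$. By Legendre's formula, the $p$-adic valuation of $\binom{2n}{n}$ is
\[
v_p\!\left(\tbinom{2n}{n}\right) \,=\, \sum_{k\geq 1}\left(\left\lfloor \tfrac{2n}{p^k}\right\rfloor - 2\left\lfloor \tfrac{n}{p^k}\right\rfloor\right),
\]
where each summand is $0$ or $1$ and vanishes once $p^k > 2n$. Hence $p^{v_p(\binom{2n}{n})} \leq 2n$ for every prime $p$, and only primes $p\leq 2n$ contribute. Multiplying, $\binom{2n}{n} \leq (2n)^{\pi(2n)}$, so $\log\binom{2n}{n} \leq \pi(2n)\log(2n)$.

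Combining the two inequalities yields $\pi(2n)\log(2n) \geq 2n\log 2 - \log(2n+1)$, i.e.
\[
\pi(2n)\,\geq\, \frac{2n\log 2}{\log(2n)} - \frac{\log(2n+1)}{\log(2n)} \,=\, \frac{2n\log 2}{\log(2n)} + \Oh(1).
\]
To upgrade from even integers to all real $x\geq 2$, I would set $n = \lfloor x/2\rfloor$ and use monotonicity $\pi(x) \geq \pi(2n)$, absorbing the discrepancy between $\frac{2n\log 2}{\log 2n}$ and $\frac{x\log 2}{\log x}$ into the $\Oh(1)$ term via an elementary calculus estimate. None of the steps present a real obstacle; the only thing to watch is the bookkeeping when passing from the asymptotic $\frac{2n\log 2}{\log 2n}$ to $\frac{x\log 2}{\log x}$, which is a routine computation showing that the difference is bounded independently of $x$.
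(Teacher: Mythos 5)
Your argument is correct and reproduces the paper's structure almost exactly: both proofs reduce to $x=2n$, apply Legendre's formula to bound the $p$-adic valuation of $\binom{2n}{n}$ by $\nu_p \leq \log_p(2n)$, deduce $\binom{2n}{n}\leq (2n)^{\pi(2n)}$, and then convert a lower bound on $\log\binom{2n}{n}$ into a lower bound on $\pi(2n)$. The one genuine difference is in how you bound $\log\binom{2n}{n}$ from below. The paper invokes Stirling's approximation to get $\log\binom{2n}{n}=2n\log 2+\Oh(\log n)$, and in fact flags Stirling as the single external ingredient that keeps the overall proof from being fully self-contained. You instead use the elementary observation that $\binom{2n}{n}$ is the largest of the $2n+1$ summands of $(1+1)^{2n}$, giving $\binom{2n}{n}\geq 4^n/(2n+1)$ and hence $\log\binom{2n}{n}\geq 2n\log 2-\log(2n+1)$. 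Both yield $\pi(2n)\geq \frac{2n\log 2}{\log(2n)}+\Oh(1)$, and your transition from $2n$ to real $x$ via monotonicity and $n=\lfloor x/2\rfloor$ is routine and sound. Your variant is thus mildly more elementary for this lemma and would remove the Stirling dependency here; note, however, that the paper still needs Stirling (indeed both its upper \emph{and} lower estimates) in the companion upper-bound \cref{lem_chebyshev_type_estimates_upper}, where the largest-term trick gives nothing, so the overall reliance on Stirling would not disappear from the paper as a whole.
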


\begin{proof}
To obtain \eqref{eqn_chebyshev_type_estimate_0} for arbitrary positive reals $x$, it is enough to prove it for all even natural numbers, i.e., $x=2n$.
In this case, the key is to study the prime factorization of the binomial coefficient ${2n}\choose{n}$.
Observe that there are $\lfloor m/p\rfloor$ many numbers in $\{1,\ldots,m\}$ that are divisible by $p$. Out of those, there are $\lfloor m/p^2\rfloor$ many divisible by $p^2$, and out of those there are $\lfloor m/p^3\rfloor$ many divisible by $p^3$, and so on.
Therefore, if $\nu$ is the largest exponent for which $p^{\nu}\leq m$, then the power of $p$ in $m!$ is equal to
$
\lfloor m/p\rfloor + \lfloor m/p^2\rfloor +\ldots +\lfloor m/p^{\nu}\rfloor.
$
In light of this observation, we see that the multiplicity of a prime $p$ in the prime factorization of $\binom{2n}{n}=\frac{(2n)!}{(n!)^2}$ is given by the formula
\begin{equation}
\label{eqn_chebyshev_type_estimate_3}
\sum_{1\leq i\leq \nu_p}\, \lfloor 2n/p^i\rfloor - 2\lfloor n/p^i\rfloor,
\end{equation}
where $\nu_p$ is the largest exponent for which $p^{\nu_p}\leq 2n$. Since $\lfloor 2n/p^i\rfloor - 2\lfloor n/p^i\rfloor\leq 1$, we can estimate $\sum_{i=1}^{\nu_p}~ \lfloor 2n/p^i\rfloor - 2\lfloor n/p^i\rfloor~\leq~ \nu_p$. This yields
\begin{equation*}
\label{eqn_chebyshev_type_estimate_4}
\mbinom{2n}{n}\,\leq\, \prod_{p\in \P\cap (1,2n]} p^{\nu_p}\,\leq\, (2n)^{|\P\cap (1,2n]|},
\end{equation*}
which after taking logarithms leaves us with
\begin{equation}
\label{eqn_chebyshev_type_estimate_5}
\log\mbinom{2n}{n}\,\leq\, \log(2n){|\P\cap (1,2n]|}.
\end{equation}
Stirling's approximation formula implies that $\log(m!)= m\log (m) -m + \Oh(\log m)$. This can now be used to finish the proof by approximating $\log{}\binom{2n}{n}$ with 
$2 \log(2) n + \Oh(\log n)$ in \eqref{eqn_chebyshev_type_estimate_5}.
\end{proof}

\cref{lem_chebyshev_type_estimates_lower} gives a reasonable lower bound on the asymptotic number of primes in $(1,x]$, which is important for the proof for part \ref{itm_i} of \cref{prop_chebyshev_type_estimates}. For the proof of part \ref{itm_ii} we also need an upper bound.

\begin{Lemma}
\label{lem_chebyshev_type_estimates_upper}
Define $\beta(\sigma)\coloneqq \sigma \log(\sigma) - (\sigma -1)\log(\sigma -1)$. Then for all $1<\sigma\leq 16$,
\begin{equation}
\label{eqn_chebyshev_type_estimate_10}
\big|\P\cap (x,\sigma x]\big|~\leq~ \frac{\beta(\sigma)\, x}{\log x}+\Oh(1).
\end{equation}
\end{Lemma}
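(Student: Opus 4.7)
The plan is to mirror the proof of \cref{lem_chebyshev_type_estimates_lower}, but run the estimate in the reverse direction: bound the binomial coefficient $\binom{m}{n}$, with $m := \lfloor \sigma x\rfloor$ and $n := \lfloor x\rfloor$, from above via Stirling's formula and from below via a product of primes lying in the target range.

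For the upper estimate I would apply Stirling $\log k! = k\log k - k + \Oh(\log k)$ to each of the three factorials in $\binom{m}{n} = m!/(n!\,(m-n)!)$, exactly as at the end of the proof of \cref{lem_chebyshev_type_estimates_lower}. The $x\log x$ contributions cancel, leaving
\[
\log \binom{m}{n} \;=\; \bigl[\sigma\log\sigma - (\sigma-1)\log(\sigma-1)\bigr]\,x + \Oh(\log x) \;=\; \beta(\sigma)\,x + \Oh(\log x),
\]
uniformly in $\sigma$ on the bounded interval $(1,16]$.

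For the lower estimate I would observe that every prime $p$ with $\max(n,m-n) < p \le m$ satisfies $p\nmid n!$ and $p\nmid (m-n)!$, while $p\mid m!$, so $p$ divides $\binom{m}{n}$; moreover, once $x$ is large in terms of $\sigma$ one has $p > \sqrt m$, in which case the $p$-adic valuation of $\binom{m}{n}$ is exactly $\lfloor m/p\rfloor \ge 1$. Combining this with $\log p \ge \log x$ throughout the range and dividing by $\log x$ yields
\[
\bigl|\P\cap(\max(x,(\sigma-1)x),\,\sigma x]\bigr| \;\le\; \frac{\beta(\sigma)\,x}{\log x} + \Oh(1).
\]
When $1<\sigma\le 2$ one has $(\sigma-1)x \le x$, so the lower endpoint is simply $x$ and the lemma follows directly.

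For $2<\sigma\le 16$ the extra stratum $(x,(\sigma-1)x]$ must be absorbed into the same estimate. My strategy would be to write $\sigma = \sigma_1\cdots\sigma_K$ as a product of a bounded number of factors in $(1,2]$ (with $K\le 4$ suffices since $\sigma\le 16$) and apply the case above on each subinterval $(\sigma_1\cdots\sigma_{j-1}x,\,\sigma_1\cdots\sigma_j x]$. The main obstacle, and the technical heart of the argument, is the bookkeeping required to recover exactly the constant $\beta(\sigma)$ on the right-hand side rather than the larger quantity $\sum_j \sigma_1\cdots\sigma_{j-1}\,\beta(\sigma_j)$ produced by a naive telescope; this will likely force a judicious choice of the split $(\sigma_j)$ together with a careful use of the uniform $\Oh(\log x)$ error in the Stirling step, so that the contributions from adjacent subintervals share error terms rather than accumulate them.
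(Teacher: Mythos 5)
Your caution is well placed, and you have in fact caught an oversight in the paper. For $1<\sigma\le 2$ your argument \emph{is} the paper's: with $m=\lfloor\sigma x\rfloor$, $n=\lfloor x\rfloor$ one has $m-n\le n+\Oh(1)$, so every prime $p\in(x,\sigma x]$ (up to $\Oh(1)$ boundary exceptions) exceeds both $n$ and $m-n$ and therefore divides $\binom{m}{n}=m!/(n!\,(m-n)!)$; combined with Stirling this gives $\log x\cdot|\P\cap(x,\sigma x]|\le\log\binom{m}{n}=\beta(\sigma)x+\Oh(\log x)$, exactly as you describe. The paper simply asserts this divisibility for all $\sigma\le 16$, which is false once $\sigma>2$: for instance with $x=5$, $\sigma=4.4$ the prime $17\in(5,22]$ appears once in $22!$ and once in $17!=(m-n)!$, so $17\nmid\binom{22}{5}=26334$, and such exceptional primes occupy the whole range $(x,(\sigma-1)x]$ rather than being $\Oh(1)$ in number.

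Your worry about the telescoping step for $2<\sigma\le 16$ is not merely technical; it is fatal, because the lemma as stated is already false there. Using the (to-be-proved) Prime Number Theorem as a sanity check, $|\P\cap(x,\sigma x]|\sim(\sigma-1)x/\log x$, whereas $\beta(\sigma)<\sigma-1$ once $\sigma$ exceeds roughly $2.85$; e.g.\ $\beta(4)=8\log 2-3\log 3\approx 2.25<3$. So no choice of split $\sigma=\sigma_1\cdots\sigma_K$ can recover the constant $\beta(\sigma)$, and the ``bookkeeping'' obstacle you identified cannot be overcome. Fortunately this is harmless for the paper: in the proof of \cref{prop_chebyshev_type_estimates} the lemma is invoked only with $\sigma=2$ and $\sigma=8^{\epsilon}\le 2$. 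The correct repair is therefore not to push into $\sigma>2$ at all but to replace the hypothesis ``$1<\sigma\le 16$'' by ``$1<\sigma\le 2$'' and stop with the case you already handle cleanly; your $K\le 4$ factorisation step should be dropped.
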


\begin{proof}
For convenience, let us write $\binom{\sigma x}{x}$ for the quantity $\binom{\lfloor \sigma x\rfloor}{\lfloor x\rfloor}$.
Observe that every prime number in the interval $(x,\sigma x]$ divides the number $\binom{\sigma x}{x}$. Therefore, the number $\binom{\sigma x}{x}$ is greater or equal than $\prod_{p\in \P\cap (x,\sigma x]}p$.
Using $\prod_{p\in \P\cap (x,\sigma x]} p\geq x^{|\P\cap (x,\sigma x]|}$ and taking logarithms, we obtain
\begin{equation}
\label{eqn_chebyshev_type_estimate_11}
\log \mbinom{\sigma x}{x}\,\geq\, \log(x){|\P\cap (x,\sigma x]|}.
\end{equation}
Similarly as in the proof of \cref{lem_chebyshev_type_estimates_lower}, we can now use Stirling's approximation formula, $\log(m!)= m\log(m) -m + \Oh(\log m)$, to estimate that
\begin{align*}
\log \mbinom{\sigma x}{x}
&\,=\, \lfloor \sigma x\rfloor\log(\lfloor \sigma x\rfloor)-\lfloor x\rfloor\log(\lfloor x\rfloor) - (\lfloor \sigma x\rfloor -\lfloor x\rfloor)\log(\lfloor \sigma x \rfloor-\lfloor x\rfloor ) + \Oh(\log x)
\\
&\,=\, \sigma x\log(\sigma x)-x\log(x) - (\sigma -1)x\log((\sigma -1)x) +\Oh(\log x)
\\
&\,=\, \sigma x\log(\sigma) - (\sigma -1)x\log(\sigma -1) + \Oh(\log x).
\end{align*}
Together with \eqref{eqn_chebyshev_type_estimate_11}, this proves \eqref{eqn_chebyshev_type_estimate_10}. 
\end{proof}

\begin{proof}[Proof of \cref{prop_chebyshev_type_estimates}]
The proof  of part \ref{itm_ii} simply follows from \cref{lem_chebyshev_type_estimates_upper} (applied with $\sigma= 8^\epsilon$) and the fact that the order of magnitude of $\sqrt{\epsilon}$ is much larger than the order of magnitude of $\beta(8^{\epsilon})$ as $\epsilon$ tends to $0$.

For the proof of part \ref{itm_i}, we start by rewriting the interval $(8^x,8^{x+1}]$ in the form
$
(8^x,8^{x+1}]=(1,8^{x+1}]\setminus \bigcup_{0\leq n\leq 3x} (\tfrac{8^x}{2^{n+1}},\tfrac{8^x}{2^n}].
$
\cref{lem_chebyshev_type_estimates_lower} gives the estimate $|\P\cap (1,8^{x+1}]|\geq {8^{x+1}}/{3(x+1)}+\Oh(1)$, whereas \cref{lem_chebyshev_type_estimates_upper} (applied with $\sigma=2$) gives the estimate $|\P\cap ({8^x}/{2^{n+1}},{8^x}/{2^n}]|\leq  8^{x}/{2^{n+1}x}+\Oh(1)$. Therefore
\begin{align*}
\big|\P\cap (8^x,8^{x+1}]\big|&~\geq ~ \frac{8^{x+1}}{3(x+1)}\,-\,\sum_{0\leq n\leq 3x}  \frac{8^x}{2^{n+1}x}\,+\,\Oh(x) ~\geq~ \frac{8^{x+1}}{3(x+1)}- \frac{8^x}{x}\,+\,\Oh(x).
\end{align*}
This implies that if $x_0$ is sufficiently large then $|\P\cap (8^x,8^{x+1}]|\geq \frac{8^{x}}{x}$ for all $x\geq x_0$.
\end{proof}

\cref{prop_chebyshev_type_estimates} is the only number-theoretic component in our proof of \cref{prop_coprimaility_measures} and, as we have mentioned above, it doesn't use any ideas that weren't already available to Chebyshev. The rest of our argument is more combinatorial in nature.

The first part of \cref{prop_chebyshev_type_estimates} tells us that we can find a fair amount of primes in any interval of the form $(8^n,8^{n+1}]$. However, we will need a bit more control over where these primes are within this interval. The following proposition tells us that in $(8^n,8^{n+1}]$ we can find two smaller intervals $(8^{x},8^{x+\delta}]$ and $(8^{y},8^{y+\delta}]$ which are not too close together but also not too far apart, and each containing a good amount of primes. 

\begin{Lemma}
\label{cor_x_y}
Let $x_0$ be as in \cref{prop_chebyshev_type_estimates}. There exists $\epsilon_1>0$ such that for all $\epsilon\in (0,\epsilon_1]$ and all $\delta\in(0,1)$ there exists $D=D(\epsilon,\delta)\in (0,1)$ with the following property: For all $n\geq x_0$ there are $x,y \in [n,n+1)$ with $\epsilon^4< y-x<\epsilon$ such that
$$
\big|\P\cap (8^{x},8^{x+\delta}]\big|\,\geq\,\frac{D 8^n}{n}, \qquad\text{and}\qquad \big|\P\cap (8^{y},8^{y+\delta}]\big|\,\geq\,\frac{D8^{n}}{n}.
$$
\end{Lemma}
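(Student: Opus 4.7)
The strategy is to leverage the Chebyshev estimates of \cref{prop_chebyshev_type_estimates} to locate two short exponential sub-intervals of $[n,n+1)$, each containing $\gtrsim 8^n/n$ primes, with starts $x,y$ separated by a quantity in $(\epsilon^4,\epsilon)$. I would first observe the monotonicity $|\P\cap(8^x,8^{x+\delta}]|\geq |\P\cap(8^x,8^{x+\delta^*}]|$ for $\delta^*\leq \delta$, which reduces the lemma to the case $\delta\leq \epsilon_0^*$ for some absolute constant $\epsilon_0^*\leq \min(\epsilon_0, 1/324)$. The argument then splits into two regimes according to the size of $\delta$.

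\textit{Measure regime ($\epsilon^4<\delta\leq \epsilon_0^*$).} Define $f(x):=|\P\cap(8^x,8^{x+\delta}]|$ for $x\in [n,n+1-\delta]$. Part \ref{itm_i} of \cref{prop_chebyshev_type_estimates}, combined with part \ref{itm_ii} to discard the primes in the boundary intervals $(8^n,8^{n+\delta}]$ and $(8^{n+1-\delta},8^{n+1}]$, gives $\int f(x)\,dx\geq (\delta/2)\cdot 8^n/n$; meanwhile part \ref{itm_ii} applied directly yields the pointwise bound $f(x)\leq 8\sqrt{\delta}\cdot 8^n/n$. A reverse-Markov computation then shows the ``good set'' $E:=\{x: f(x)\geq (\delta/4)\cdot 8^n/n\}$ has Lebesgue measure at least $\sqrt{\delta}/32$. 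The plan is to rule out the possibility $(E-E)\cap(\epsilon^4,\epsilon)=\emptyset$: if this held, then for any $x_1\in E$ the cluster $E\cap[x_1-\epsilon^4, x_1+\epsilon^4]$ would have diameter at most $\epsilon^4$ (any larger intra-cluster distance falls in the forbidden interval $(\epsilon^4,\epsilon)$), with consecutive clusters separated by $\geq \epsilon-2\epsilon^4$; this forces $\mu(E)\leq (2/\epsilon)\cdot\epsilon^4=2\epsilon^3$, contradicting the measure lower bound once $\delta>\epsilon^4$ and $\epsilon_1$ is sufficiently small. So one finds $x,y\in E$ with $y-x\in(\epsilon^4,\epsilon)$, establishing the lemma here with $D=\delta/4$.

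\textit{Pigeonhole regime ($\delta\leq\epsilon^4$).} Partition $[n,n+1)$ into $\lfloor 1/\epsilon^4\rfloor$ blocks of length $\epsilon^4$. By parts \ref{itm_i}--\ref{itm_ii}, the total prime count is $\geq 8^n/n$ while each block has $\leq 8\epsilon^2\cdot 8^n/n$ primes, so a counting argument in the spirit of the proof of \cref{prop_coprimality_criterion} yields $\gtrsim 1/\epsilon^2$ ``rich'' blocks, each containing $\geq (\epsilon^4/2)\cdot 8^n/n$ primes. Grouping the $\sim 1/\epsilon^4$ blocks into $\sim 1/\epsilon$ super-windows of $\lfloor 1/\epsilon^3\rfloor$ consecutive blocks (so each super-window spans $\leq \epsilon$ in the exponent), pigeonhole finds a super-window containing at least three rich blocks. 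Among any three rich blocks in a super-window, two have index difference at least $2$. Subdividing each of these two rich blocks into length-$\delta$ micro-blocks and selecting the richest in each produces starts $x,y$ with $|\P\cap(8^x,8^{x+\delta}]|,|\P\cap(8^y,8^{y+\delta}]|\geq (\delta/2)\cdot 8^n/n$, and a short arithmetic check confirms $y-x\in(\epsilon^4,\epsilon)$.

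\textit{Expected main obstacle.} No individual case is deep; the argument is essentially a two-scale pigeonhole/reverse-Markov. The delicate point is choosing $\epsilon_1$ small enough (roughly $\epsilon_1\leq 1/64$) so that the two regimes jointly cover all $\delta\in(0,\epsilon_0^*]$, and verifying that the small constants align. The exponent $4$ in the separation hypothesis $y-x>\epsilon^4$ is what makes the cluster bound $\mu(E)\leq 2\epsilon^3$ in the measure regime compatible with the lower bound $\mu(E)\geq \sqrt{\delta}/32$ precisely when $\delta>\epsilon^4$.
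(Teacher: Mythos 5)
Your proof is correct and arrives at the same conclusion as the paper, but via a genuinely different decomposition. The paper's argument is a single hierarchical pigeonhole, uniform in $\delta$: first locate one $\epsilon$-subinterval of $[n,n+1)$ carrying $\gtrsim \epsilon\cdot 8^n/n$ primes, then cover it by $\sim\epsilon^{-3}$ blocks of exponential width $\epsilon^4$, use the pointwise upper bound from \cref{prop_chebyshev_type_estimates}\ref{itm_ii} to extract two \emph{non-consecutive} rich blocks (each with $\gtrsim\epsilon^4 8^n/n$ primes), and finally pigeonhole each rich block once more into $\delta$-intervals; the constant $D$ that emerges is of order $\delta\epsilon^4$ (or $\epsilon^4$ if $\delta\geq\epsilon^4$). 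Your approach instead first reduces to $\delta\leq\epsilon_0^*$ via monotonicity and then splits on $\delta\lessgtr\epsilon^4$. The ``pigeonhole regime'' is essentially the paper's scheme run globally on all of $[n,n+1)$ rather than localized to one rich $\epsilon$-window (with the $\epsilon$-windows reappearing as super-windows), and works out to the same constants. The ``measure regime'' is new: replacing the discrete covering by the integral $\int f$, a reverse-Markov inequality, and a density-vs-cluster contradiction is a cleaner way to produce the pair $x,y$ when $\delta$ is not too small, and it isolates nicely why the exponent $4$ in $y-x>\epsilon^4$ is the right threshold (the measure lower bound $\sqrt{\delta}/32>\epsilon^2/32$ must beat the cluster upper bound $2\epsilon^3$). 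The paper's unified argument is shorter because it never needs the integral machinery, but your regime split makes the role of the parameters somewhat more transparent. One small numerical slip: in the pigeonhole regime the richest micro-block gives $\geq(\delta/4)8^n/n$ rather than $(\delta/2)8^n/n$, since $\lceil\epsilon^4/\delta\rceil$ can be as large as $2\epsilon^4/\delta$; this is harmless since $D$ is allowed to depend on both $\epsilon$ and $\delta$.
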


\begin{proof}
As guaranteed by \cref{prop_chebyshev_type_estimates}, the number of primes in $(8^n,8^{n+1}]$ is at least ${8^{n}}/{n}$. Therefore, by the Pigeonhole Principle, for some $t\in [n,n+1)$ the number of primes in $(8^{t},8^{t+\epsilon}]$ is at least $\epsilon 8^{n}/2n$.
We can then cover the interval $(8^t,8^{t+\epsilon}]$ by $K\coloneqq \lceil \epsilon^{-3}\rceil$ many smaller intervals in the following way:
$$
(8^t,8^{t+\epsilon}]= \big(8^{t},8^{t+\epsilon^4}\big]\cup\big(8^{t+\epsilon^4},8^{t+2\epsilon^4}\big]\cup\ldots \cup \big(8^{t+ (K-1)\epsilon^4},8^{t+ K\epsilon^4}\big].
$$
By \cref{lem_chebyshev_type_estimates_upper}, each of the intervals $(8^{t+i\epsilon^4},8^{t+(i+1)\epsilon^4}]$ contains at most $\Oh({\epsilon^{2}8^n}/{n})$ many primes, whereby the whole interval $(8^t,8^{t+\epsilon}]$ contains at least $\Oh({\epsilon 8^{n}}/{n})$ many primes. It follows that if $\epsilon$ is chosen sufficiently small, say smaller than some threshold $\epsilon_1$, then we can find two non-consecutive $a,b\in\{0,1,\ldots,K-1\}$ such that the intervals $(8^{t+a\epsilon^4},8^{t+(a+1)\epsilon^4}]$ and $(8^{t+b\epsilon^4},8^{t+(b+1)\epsilon^4}]$ contain at least $\Oh({\epsilon^4 8^{n}}/{n})$ many primes. Using the Pigeonhole Principle once more we can then find for every $\delta\in(0,1)$ some $x\in [t+a\epsilon^4,t+(a+1)\epsilon^4)$ and some $y\in [t+b\epsilon^4,t+(b+1)\epsilon^4)$ such that the intervals $(8^{x},8^{x+\delta}]$ and $(8^{y},8^{y+\delta}]$ contain at least $\Oh({\delta \epsilon^4 8^{n}}/{n})$ many primes. Since $a$ and $b$ are non-consecutive, we have $y-x>\epsilon^4$, and since $x,y\in [t,t+\epsilon)$ we have $y-x<\epsilon$.
\end{proof}

The final ingredient in our proof of \cref{prop_coprimaility_measures} is a purely combinatorial lemma.

\begin{Lemma}
\label{lem_new_sums}
Fix $x_0\geq 1$ and $\epsilon>0$. Suppose $\mathcal{X}$ is a subset of $\R$ with the property that for every $n\geq x_0$ there exist $x,y\in\mathcal{X}\cap [n,n+1)$ with $\epsilon^4< y-x<\epsilon$. 
Let $k\geq\lceil 2/\epsilon^4\rceil$. Then for all $n_1,\ldots,n_k\in\{n\in \N: n\geq x_0\}$ there exist $z,z_{1},\ldots,z_{k}\in \mathcal{X}$ such that 
\begin{enumerate}	
[label=(\Roman{enumi}),ref=(\Roman{enumi}),leftmargin=*]
\item\label{S_1}
$z_i\in [n_i,n_i+1)$ for all $1\leq i \leq k$;
\item\label{S_2}
$z_1+\ldots+z_k\in [z, z+\epsilon)$.
\end{enumerate}
\end{Lemma}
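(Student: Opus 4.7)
My plan is to restrict attention to the two $\mathcal{X}$-points guaranteed by the hypothesis in each unit interval, and show that a pigeonhole-style argument forces some subset-sum into a ``good set''. For each $i\in\{1,\ldots,k\}$, fix $x_i, y_i \in \mathcal{X}\cap[n_i,n_i+1)$ with $\delta_i\coloneqq y_i-x_i\in(\epsilon^4,\epsilon)$; similarly fix $x^{(p)}, y^{(p)}\in\mathcal{X}\cap[p,p+1)$ with an analogous gap for each integer $p\geq x_0$. Define the good set
\[
G\coloneqq \bigcup_{p\geq x_0}\bigl([x^{(p)},\, x^{(p)}+\epsilon)\,\cup\,[y^{(p)},\, y^{(p)}+\epsilon)\bigr).
\]
Any element of $G$ satisfies \ref{S_2} for some $z\in\mathcal{X}$, so it is enough to exhibit $T\subseteq\{1,\ldots,k\}$ with $S_T\coloneqq \sum_{i\in T}y_i + \sum_{i\notin T}x_i \in G$; then setting $z_i=y_i$ for $i\in T$ and $z_i=x_i$ otherwise yields both \ref{S_1} and \ref{S_2}.

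I would next fix any linear ordering $i_1,\ldots,i_k$ of $\{1,\ldots,k\}$ and consider the monotone trajectory $s_0\coloneqq \sum_i x_i$ and $s_l\coloneqq s_{l-1}+\delta_{i_l}$ for $1\leq l\leq k$, whose consecutive gaps all lie in $(\epsilon^4,\epsilon)$. Since $k\geq 2/\epsilon^4$, the span $s_k-s_0>k\epsilon^4\geq 2$, so the trajectory crosses at least two integer boundaries and hence passes through at least three consecutive unit intervals. Pick any ``middle'' unit interval $[p,p+1)$ traversed by the trajectory, meaning the trajectory enters it from below and exits above; let $s_l,\ldots,s_{l+r}$ be the trajectory values inside $[p,p+1)$.

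The heart of the argument is a local case analysis on $[p,p+1)$. If some $s_j$ lies in $G\cap[p,p+1)$ we are done; so suppose not. I claim by induction that all of $s_l,\ldots,s_{l+r}$ lie in $[p,x^{(p)})$. The entering value satisfies $s_l<s_{l-1}+\epsilon<p+\epsilon\leq y^{(p)}+\epsilon$, and since $s_l\notin G$, it must lie in $[p,x^{(p)})$ (the only portion of $[p,y^{(p)}+\epsilon)$ outside $G$). If $s_{l+j-1}\in[p,x^{(p)})$, then $s_{l+j}<x^{(p)}+\epsilon<y^{(p)}+\epsilon$, so the same reasoning forces $s_{l+j}\in[p,x^{(p)})$. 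Now since the trajectory exits $[p,p+1)$, the next value obeys $s_{l+r+1}\in[p+1,\,s_{l+r}+\epsilon)\subseteq[p+1,\,x^{(p)}+\epsilon)$; and using $x^{(p)}<y^{(p)}<p+1\leq s_{l+r+1}<y^{(p)}+\epsilon$, we conclude $s_{l+r+1}\in[y^{(p)},y^{(p)}+\epsilon)\subseteq G$. Hence in every situation some trajectory value belongs to $G$, and the proof is complete.

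The main obstacle is this local case analysis within a single unit interval: in particular, the inductive argument that once the trajectory falls in $[p,x^{(p)})$ it cannot escape into the right gap $[y^{(p)}+\epsilon,p+1)$, and the careful verification that when the trajectory skips over the tail $[x^{(p)},p+1)$, the exit point correctly lands in the $[y^{(p)},y^{(p)}+\epsilon)$ portion of $G$, which requires the asymmetric half-open orientation.
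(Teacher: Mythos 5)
Your proof is correct, and the underlying idea is the same as the paper's: build a monotone trajectory from $\sum_i x_i$ to $\sum_i y_i$ with consecutive gaps in $(\epsilon^4,\epsilon)$ and total span exceeding $2$, then pigeonhole to land some trajectory value in an $\epsilon$-window anchored at a point of $\mathcal{X}$. The paper streamlines what you flag as the main obstacle: since the span swallows a full unit block $[m,m+1)$ with $m\geq x_0$, one simply fixes a single $z\in\mathcal{X}\cap[m,m+1)$ and observes that the trajectory must cross $z$, so (gaps being $<\epsilon$) the trajectory value immediately past $z$ lies in $[z,z+\epsilon)$ --- no auxiliary good set $G$, no second reference point per block, and no induction within a middle interval are required.
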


\begin{proof}
Let $n_1,\ldots,n_k\in\{n\in \N: n\geq x_0\}$.
According to the hypothesis of the lemma, we can find for every $i=1,\ldots,k$ two numbers $x_{i},y_{i} \in [n_i,n_i+1)\cap \mathcal{X}$ with $\epsilon^4< y_i-x_i<\epsilon$. Define, for every $i=0,1,\ldots,k$, the number
$
u_i\coloneqq x_1+\ldots+x_{i}+y_{i+1}+\ldots+y_k.
$
Since $\epsilon^4< y_{i}-x_{i}$ we have $u_k-u_0\geq k\epsilon^4\geq 2$. This implies that there exists some $z\in\mathcal{X}$ with $u_0< z<u_k$. Since $u_{i+1}-u_i<\epsilon$, there is some $i_0\in\{0,1,\ldots,k\}$ such that $u_{i_0}\in [z,z+\epsilon)$. Setting $z_i\coloneqq x_i$ for $i\leq i_0$ and $z_i\coloneqq y_i$ for $i> i_0$, we obtain $z_1+\ldots+z_k=u_{i_0}\in [z,z+\epsilon)$ as desired.
\end{proof}

\begin{proof}[Proof of \cref{prop_coprimaility_measures}]
Let $\etan\in(0,1)$ be given. 
Let $x_0$ and $\epsilon_0$ be as in \cref{prop_chebyshev_type_estimates} and let 
$\epsilon_1$ be as in \cref{cor_x_y}. Pick any $\epsilon>0$ with $\epsilon<\min\{\epsilon_0,\epsilon_1,{\log(1+\etan)}/{\log(64)}\}$ and set $k_0\coloneqq \lceil 2/\epsilon^4\rceil$. We claim that $k_0$ is as desired, meaning that for all $k\geq k_0$ there exist two finite, non-empty sets $B_1,B_2\subset\N$ satisfying properties \ref{itm_a}, \ref{itm_b}, and \ref{itm_c}.

To verify this claim, fix $k\geq k_0$, 
set $\delta\coloneqq {\epsilon}/{k}$, and let $D=D(\epsilon,\delta)$ be as in \cref{cor_x_y}.
Define
$$
\mathcal{X}\,\coloneqq\,\Big\{x\geq x_0: \big|\P\cap (8^{x},8^{x+\delta}]\big|\geq {D 8^{\lfloor x\rfloor}}/{\lfloor x\rfloor}\Big\},
$$
and for every $x\in\mathcal{X}$ let $P_x$ be a subset of $\P\cap (8^{x},8^{x+\delta}]$ with $|P_x|=\lfloor {D 8^{\lfloor x\rfloor} }/{\lfloor x\rfloor } \rfloor$. The sets $P_x$ are the building blocks from which we will construct $B_1$ and $B_2$. 

According to \cref{cor_x_y} the set $\mathcal{X}$ satisfies the hypothesis of \cref{lem_new_sums}, which allows us to find for all $\vec{n}=(n_1,\ldots,n_k)\in\{n\in \N: n\geq x_0\}^k$ numbers $z_{\vec{n}},z_{1,\vec{n}},\ldots,z_{k,\vec{n}}\in \mathcal{X}$ such that 
\begin{enumerate}	
[label=(\Roman{enumi}),ref=(\Roman{enumi}),leftmargin=*]
\item\label{S_1n}
$z_{i,\vec{n}}\in [n_i,n_i+1)$ for all $1\leq i \leq k$, and
\item\label{S_2n}
$z_{1,\vec{n}}+\ldots+z_{k,\vec{n}}\in [z_{\vec{n}}, z_{\vec{n}}+\epsilon)$.
\end{enumerate}
Note that Property \ref{S_1} and the definition of $P_x$ imply
\begin{align}
\label{eqn_size_P_phi}
|P_{z_{i,\vec{n}}}| \,=\, \left\lfloor {D 8^{n_i}}/{n_i} \right\rfloor.
\end{align}

Next, let $N=N(D,\etan,k)$ be a constant that is to be determined later, and define sets $A_1,\ldots,A_k\subset\{n\in \N: n\geq x_0\}$ in the following way: 
Pick any $s_1\in \N$ with $s_1> \max\{x_0,2k\}$ and let $A_1$ be any finite subset of $s_1\N=\{s_1 n: n\in\N\}$ with $\sum_{n\in A_1}1/n\geq N$.
Then, assuming $A_i$ has already been found, take any $s_{i+1}> \max (A_1+\ldots+A_i)$ and let $A_{i+1}$ be any finite subset of $s_{i+1}\N$ with $\sum_{n\in A_{i+1}}1/n\geq N$. Following this procedure until $i=k$, we end up with a family of finite sets $A_1,\ldots,A_k$ with the following property:
\begin{enumerate}	
[label=(\Alph{enumi}),ref=(\Alph{enumi}),leftmargin=*]
\item\label{U_2}
For any $(n_1,\ldots,n_k)\neq (n_1',\ldots,n_k')\in A_1\times\ldots \times A_k$ the distance between $n_1+\ldots+n_k$ and $n_1'+\ldots+n_k'$ is at least $2k$.
\end{enumerate}
\noindent We are now ready to define the sets $B_1$ and $B_2$. The set $B_2$ is defined as
\begin{align}
\label{eqn_def_b2}
B_2\coloneqq \bigcup_{\vec{n}\in A_1\times\ldots\times A_k} P_{z_{1,\vec{n}}}\cdot\ldots\cdot P_{z_{k,\vec{n}}}.
\end{align}
According to \eqref{eqn_size_P_phi} we have
\[
\big|P_{z_{1,\vec{n}}}\cdot\ldots\cdot P_{z_{k,\vec{n}}}\big|\,\leq\, \frac{D^k 8^{n_1+\ldots+n_k}}{n_1\cdot\ldots\cdot n_k}\,\leq\, \frac{D^k 8^{n_1+\ldots+n_k}}{n_1+\ldots+ n_k}\,\leq\, \frac{D 8^{\lfloor z_{\vec{n}}\rfloor}}{\lfloor z_{\vec{n}}\rfloor},
\]
where the last inequality follows from $D^k\leq D$ and $n_1+\ldots+n_k\leq \lfloor z_{\vec{n}}\rfloor$.
Therefore $|P_{z_{1,\vec{n}}}\cdot\ldots\cdot P_{z_{k,\vec{n}}}|\,\leq \, |P_{z_{\vec{n}}}|$, which guarantees the existence of a set $Q_{\vec{n}}\subset P_{z_{\vec{n}}}$ with $|Q_{\vec{n}}|=|P_{z_{1,\vec{n}}}\cdot\ldots\cdot P_{z_{k,\vec{n}}}|$. Define $B_1$ to be 
\begin{align}
\label{eqn_def_b1}
B_1\coloneqq \bigcup_{\vec{n}\in A_1\times\ldots\times A_k} Q_{\vec{n}}.
\end{align}
It remains to show that $B_1$ and $B_2$ satisfy properties \ref{itm_a}, \ref{itm_b}, and \ref{itm_c}.

\begin{proof}[Proof that $B_1$ and $B_2$ satisfy \ref{itm_a}]
\renewcommand\qedsymbol{$\triangle$}
By construction, $B_1$ consists only of primes and $B_2$ only of numbers that are a product of $k$ primes. 
\end{proof}

\begin{proof}[Proof that $B_1$ and $B_2$ satisfy \ref{itm_b}]
\renewcommand\qedsymbol{$\triangle$}
By definition we have $P_{z_{i,\vec{n}}}\subset (8^{z_{i,\vec{n}}},8^{z_{i,\vec{n}}+\delta}]$, and so
\begin{align*}
P_{z_{1,\vec{n}}}\cdot\ldots\cdot P_{z_{k,\vec{n}}}
&\,\subset\,
(8^{z_{1,\vec{n}}},8^{z_{1,\vec{n}}+\delta}]\cdot\ldots\cdot(8^{z_{k,\vec{n}}},8^{z_{k,\vec{n}}+\delta}]
\\
&\,\subset\,
(8^{z_{1,\vec{n}}+\ldots+z_{k,\vec{n}}},8^{z_{1,\vec{n}}+\ldots+z_{k,\vec{n}}+k\delta}]
\\
&\,\subset\,
(8^{z_{\vec{n}}},8^{z_{\vec{n}}+2\epsilon}],
\end{align*}
where the last inclusion follows from Property~\ref{S_2} and the fact that $k\delta \leq \epsilon$. Also by definition we have $Q_{\vec{n}}\subset (8^{z_{\vec{n}}},8^{z_{\vec{n}}+\delta}]\subset (8^{z_{\vec{n}}},8^{z_{\vec{n}}+2\epsilon}]$.
Using the fact that $|n_1+\ldots+n_k - z_{\vec{n}}|\leq k+1$ and Property~\ref{U_2}, we conclude that $(P_{z_{1,\vec{n}}}\cdot\ldots\cdot P_{z_{k,\vec{n}}})\cap (P_{z_{1,\vec{n}'}}\cdot\ldots\cdot P_{z_{k,\vec{n}'}})=\emptyset$ and $Q_{\vec{n}}\cap Q_{\vec{n}'}=\emptyset$ whenever $\vec{n}\neq \vec{n}'$.
Since $|P_{z_{1,\vec{n}}}\cdot\ldots\cdot P_{z_{k,\vec{n}}}|= |Q_{\vec{n}}|$, we immediately get that $B_1$ and $B_2$ have the same cardinality. Moreover, both  $Q_{\vec{n}}$ and $P_{z_{1,\vec{n}}}\cdot\ldots\cdot P_{z_{k,\vec{n}}}$ belong to the interval $(8^{z_{\vec{n}}},8^{z_{\vec{n}}+2\epsilon}]$, which implies that the ratio between any element in $P_{z_{1,\vec{n}}}\cdot\ldots\cdot P_{z_{k,\vec{n}}}$ and any element in $Q_{\vec{n}}$ lies between $8^{-2\epsilon}$ and $8^{2\epsilon}$. Since $8^{2\epsilon}\leq (1+\etan)$, 
it follows that for enumerations $P_{z_{1,\vec{n}}}\cdot\ldots\cdot P_{z_{k,\vec{n}}}= \{q_1 <\ldots < q_r\}$ and $Q_{\vec{n}}=\{p_1 <\ldots < p_r\}$ we have $(1-\etan)p_j\leq q_j\leq (1+\etan)p_j$ for all $j=1,\ldots,r$. This property now easily extends to enumerations of $B_1$ and $B_2$.
\end{proof}

\begin{proof}[Proof that $B_1$ and $B_2$ satisfy \ref{itm_c}]
\renewcommand\qedsymbol{$\triangle$}
First, let us show $\mathbb{E}^{\log}_{p\in B_1}\mathbb{E}^{\log}_{p'\in B_1} \Phi(p,p')\leq \etan$.
Since $B_1$ consists only of primes, we have $\Phi(p,p')=0$ unless $p=p'$. Therefore
\begin{align}
\label{eqn_coprimality_B1_1}
\BEul{p\in B_1}\,\BEul{p'\in B_1} \Phi(p,p')
\,=\,\frac{\sum_{p,p'\in B_1} \frac{\Phi(p,p')}{pp'}}{\sum_{p,p'\in B_1}\frac{1}{pp'}}
\,\leq\,\frac{\sum_{p\in B_1} \frac{1}{p}}{\sum_{p,p'\in B_1}\frac{1}{pp'}}
\,=\,\frac{1}{\sum_{p\in B_1}\frac{1}{p}}.
\end{align}
Note, in view of \eqref{eqn_def_b1} we have $\sum_{p\in B_1}1/p = \sum_{\vec{n}\in A_1\times\ldots\times A_k} \sum_{p\in Q_{\vec{n}}} 1/p$ and since any element in $Q_{\vec{n}}$ is smaller than $8^{n_1+\ldots+n_k+k+1}$ we have
\[
\sum_{p\in B_1}\sfrac{1}{p} \,\geq \, \sum_{\vec{n}=(n_1,\ldots,n_k)\in A_1\times\ldots\times A_k} \sfrac{|Q_{\vec{n}}|}{8^{k+1} 8^{n_1+\ldots+n_k}}.
\]
Then, using \eqref{eqn_size_P_phi} and $|Q_{\vec{n}}|=|P_{z_{1,\vec{n}}}\cdot\ldots\cdot P_{z_{k,\vec{n}}}|$ we can estimate $|Q_{\vec{n}}|\geq D^k 8^{-k} 8^{n_1+\ldots+n_k}/ (n_1\cdot\ldots\cdot n_k)$, which implies 
\begin{align}
\label{eqn_coprimality_B1_2}
\sum_{p\in B_1}\sfrac{1}{p} \,\geq \, \sum_{(n_1,\ldots,n_k)\in A_1\times\ldots\times A_k} \sfrac{D^k}{8^{2k+1}\, n_1\cdot\ldots\cdot n_k}\,\geq\, \sfrac{D^k N^k}{8^{2k+1}}.
\end{align}
Thus, if $N$ was chosen sufficiently large, then $\mathbb{E}^{\log}_{p\in B_1}\mathbb{E}^{\log}_{p'\in B_1} \Phi(p,p')\leq \etan$ follows from combining \eqref{eqn_coprimality_B1_1} and \eqref{eqn_coprimality_B1_2}.  

Next, let us show $\mathbb{E}^{\log}_{q\in B_2}\mathbb{E}^{\log}_{q'\in B_2} \Phi(q,q')\leq \etan$.
In view of \eqref{eqn_def_b2} we have 
\begin{align}
\label{eqn_coprimality_B2_1}
\sum_{q,q'\in B_2} \sfrac{\Phi(q,q')}{qq'} 
~=~
\sum_{\vec{n},\vec{n}'\in A_1\times\ldots\times A_k}~~\sum_{q\in P_{z_{1,\vec{n}}}\cdots P_{z_{k,\vec{n}}}}~~\sum_{q'\in P_{z_{1,\vec{n}'}}\cdots P_{z_{k,\vec{n}'}}} \sfrac{\Phi(q,q')}{qq'}.
\end{align}
If $q\in P_{z_{1,\vec{n}}}\cdot\ldots\cdot P_{z_{k,\vec{n}}}$ and $q'\in P_{z_{1,\vec{n}'}}\cdot\ldots\cdot P_{z_{k,\vec{n}'}}$ are coprime then $\Phi(q,q')=0$. Hence, such a pair does not contribute to \eqref{eqn_coprimality_B2_1}.
On the other hand,  if $q$ and $q'$ are not coprime then there must exist a finite non-empty set $F\subset \{1,\ldots,k\}$, a number $u\in \prod_{i\in F}P_{z_{i,\vec{n}'}}$, and a number $u'\in  \prod_{i\notin F}P_{z_{i,\vec{n}'}}$ such that $q'=uu'$ and $\gcd(q,q')=u${; note that this only happens when $n_i=n_i'$ for all $i\in F$ because $P_{z_{i,\vec{n}}}$ and $P_{z_{i,\vec{n}'}}$ are disjoint otherwise}. In this case, we have
\[
\frac{\Phi(q,q')}{qq'}  \,=\,\frac{u-1}{qq'} \,\leq\,\frac{1}{q u'},
\]
and therefore\footnote{In the published version of this paper the restriction $n_i=n_i'$, $i\in F$, in the subscript of the second summation on the right hand side of equation  \eqref{eqn_fm} is missing, leading to a mistake that carries through the rest of the argument. This is corrected here. We thank Corentin Darreye for bringing this issue to our attention.}
\begin{align}
\label{eqn_fm}
\sum_{q,q'\in B_2} \sfrac{\Phi(q,q')}{qq'}
&~\leq ~
\sum_{F\subset\{1,\ldots,k\}\atop F\neq\emptyset} ~~\sum_{\vec{n},\vec{n}'\in A_1\times\ldots\times A_k\atop { n_i=n_i',~i\in F}}~~\sum_{q\in P_{z_{1,\vec{n}}}\cdots P_{z_{k,\vec{n}}}}~~\sum_{u'\in \prod_{i\notin F} P_{z_{i,\vec{n}'}}} \sfrac{1}{q u'}.  
\end{align}
Next, we can use $P_{z_{i,\vec{n}}}\subset (8^{n_i},8^{n_i+1}]$ and $P_{z_{i,\vec{n}'}}\subset (8^{n_i'},8^{n_i'+1}]$ to deduce that 
\begin{align*}
\sum_{q\in P_{z_{1,\vec{n}}}\cdots P_{z_{k,\vec{n}}}}~\sum_{u'\in \prod_{i\notin F} P_{z_{i,\vec{n}'}}} \sfrac{1}{q u'}
&\,\leq\, \sum_{q\in P_{z_{1,\vec{n}}}\cdots P_{z_{k,\vec{n}}}}~\sum_{u'\in \prod_{i\notin F} P_{z_{i,\vec{n}'}}}\left(\prod_{i=1}^k \sfrac{1}{8^{n_i}}\right)\left(\prod_{i\notin F} \sfrac{1}{8^{n_i'}}\right)
\\
&\,=\, \left(\prod_{i=1}^k \sfrac{|P_{z_{i,\vec{n}}}|}{8^{n_i}}\right)\left(\prod_{i\notin F} \sfrac{|P_{z_{i,\vec{n}'}}|}{8^{n_i'}}\right).
\end{align*}
Thereafter, using \eqref{eqn_size_P_phi}, we get  
\[
\left(\prod_{i=1}^k \sfrac{|P_{z_{i,\vec{n}}}|}{8^{n_i}}\right)\left(\prod_{i\notin F} \sfrac{|P_{z_{i,\vec{n}'}}|}{8^{n_i'}}\right)\,\leq\,
\left(\prod_{i=1}^k \sfrac{D}{n_i}\right)\left(\prod_{i\notin F} \sfrac{D}{n_i'}\right).
\]
All together, this implies that
\begin{align*}
\sum_{q,q'\in B_2} \sfrac{\Phi(q,q')}{qq'}
&~\leq ~
\sum_{F\subset\{1,\ldots,k\}\atop F\neq\emptyset} ~~\sum_{\vec{n},\vec{n}'\in A_1\times\ldots\times A_k\atop { n_i=n_i',~i\in F}}~~\left(\prod_{i=1}^k \sfrac{D}{n_i}\right)\left(\prod_{i\notin F} \sfrac{D}{n_i'}\right)
\\
&~= ~
\sum_{F\subset\{1,\ldots,k\}\atop F\neq\emptyset} ~{D^{2k-|F|}}\left(\prod_{i=1}^k \sum_{n\in A_i}\sfrac{1}{n_i}\right)\left(\prod_{i\notin F} \sum_{n\in A_i}\sfrac{1}{n_i'}\right).  
\end{align*}
A calculation similar to \eqref{eqn_coprimality_B1_2} shows that $\sum_{q\in B_2}\frac{1}{q}\geq \frac{D^k}{8^{3k}}\prod_{i=1}^k\sum_{n\in A_i}\frac1n$. Putting everything together gives
\begin{align*}
\BEul{q\in B_2}\,\BEul{q'\in B_2} \Phi(q,q')
&\,=\,\frac{\sum_{q,q'\in B_2} \frac{\Phi(q,q')}{qq'}}{\sum_{q,q'\in B_2}\frac{1}{qq'}}
\\
&\,\leq\,\frac{\sum_{F\subset\{1,\ldots,k\},\, F\neq\emptyset} ~{D^{2k-|F|}}\left(\prod_{i=1}^k \sum_{n\in A_i}\frac{1}{n}\right)\left(\prod_{i\notin F} \sum_{n\in A_i}\frac{1}{n}\right)}{\left(\frac{D^k}{8^{3k}}\prod_{i=1}^k \sum_{n\in A_i}\frac{1}{n}\right)\left(\frac{D^k}{8^{3k}}\prod_{i=1}^k \sum_{n\in A_i}\frac{1}{n}\right)}
\\
&\,\leq\,\sum_{F\subset\{1,\ldots,k\}\atop F\neq\emptyset}\frac{{8^{6k} D^{-|F|}}}{\left(\prod_{i\in F} \sum_{n\in A_i}\frac{1}{n}\right)}.
\end{align*}
Finally, since $\sum_{n\in A_i} 1/n\geq N$, we see that $\mathbb{E}^{\log}_{q\in B_2}\mathbb{E}^{\log}_{q'\in B_2} \Phi(q,q')\leq \etan$ as long as $N$ was chosen sufficiently large. 
\end{proof}
\noindent This completes the proof of \cref{prop_coprimaility_measures}.
\end{proof}

\paragraph{\textbf{Acknowledgments:}} 
The author thanks Vitaly Bergelson and Redmond McNamara for commenting on an earlier version of this paper, and the anonymous referee for providing numerous helpful suggestions.
This work was supported by the National Science Foundation under grant number DMS~1901453.


\bibliographystyle{aomalphanomr}
\bibliography{mynewlibrary}




\bigskip
\footnotesize
\noindent
Florian K.\ Richter\\
\textsc{Northwestern University}\\
\href{mailto:fkr@northwestern.edu}
{\texttt{fkr@northwestern.edu}}

\end{document}